\newcommand{\rset}{\mathbb{R}}
\newtheorem{assumption}[theorem]{Assumption}
\providecommand{\norm}[1]{\lVert#1\rVert}
\begin{document}

\title{Adaptive inexact fast augmented Lagrangian  methods
for constrained convex optimization
\thanks{ First author's work has been funded  by the Sectoral Operational
Programme Human Resources Development 2007-2013 of the Ministry of
European Funds through the Financial Agreement
POSDRU/159/1.5/S/134398.}
}

\titlerunning{Adaptive inexact fast augmented Lagrangian  methods}
\author{Andrei Patrascu \and Ion Necoara  \and Quoc Tran-Dinh}

\authorrunning{Patrascu, Necoara, Tran-Dinh}

\institute{A. Patrascu and I. Necoara \at
Automatic Control and Systems Engineering Department, University
Politehnica Bucharest, 060042 Bucharest, Romania,
\email{\{andrei.patrascu,ion.necoara\}@acse.pub.ro} \\
Q. Tran-Dinh \at
Laboratory for Information and Inference Systems, EPFL,  Lausanne, Switzerland, \email{quoc.trandinh@epfl.ch}.
}

\date{Received: May 2015 / Accepted: date}

\maketitle

\begin{abstract}
In this paper we analyze several inexact fast augmented Lagrangian
methods for solving linearly constrained convex optimization
problems. Mainly, our methods rely on the combination of
excessive-gap-like smoothing technique developed in
\cite{Nes:excessive:05} and the newly introduced
inexact oracle framework from \cite{DevGli:14}.
We analyze several algorithmic instances with constant and adaptive smoothing
parameters and derive total computational  complexity results in terms 
of projections onto a simple primal set. For the basic inexact fast 
augmented Lagrangian algorithm we obtain the overall computational 
complexity of order  $\mathcal{O}\left(\frac{1}{\epsilon^{5/4}}\right)$, 
while for the adaptive variant we get  $\mathcal{O}\left(\frac{1}{\epsilon}\right)$, 
projections onto a primal set in order to obtain an $\epsilon-$optimal solution 
for our original problem.

\keywords{Inexact augmented Lagrangian \and smoothing techniques
\and primal-dual fast gradient \and excessive gap \and overall
computational complexity.} \subclass{90C25 \and 90C46 \and 49N15.}
\end{abstract}

\section{Introduction}
Large-scale constrained convex optimization models are convenient tools to formulate several practical problems in modern engineering, statistics, and economical applications.
Several important applications in such fields can be modeled into this class of problems such as:
linear (distributed) model predictive control \cite{NecNed:13,
NedNec:14}, network utility maximization \cite{BecNed:14}, or
compressed sensing \cite{AybIye:12,QuoSav:12}.
However, solving large-scale optimization problems is still a challenge in many applications due
to the limitations of computational devices and computer systems, as well as the limitations of existing algorithm techniques.

In the recent optimization literature, the primal-dual first order methods have gained a great attention due to their low complexity-per-iteration and  
their flexibility of handling linear operators, constraints and nonsmoothness.
In the constrained case, when complicated constraints are present, many first order algorithms are combined
with duality or penalty strategies in order to process them.
Amongst the most popular primal-dual approaches, which behaves very well in practice, the augmented 
Lagrangian (AL) strategy was extensively studied, e.g., in \cite{Roc:76, LanMon:14, AybIye:12, NecPat:15, NedNec:14, QuoCev:14, HeTao:12, HeYan:04}.
It is well known that the AL smoothing technique is a multi-stage strategy implying successive computations of solutions for certain primal-dual subproblems.
In general, these subproblems do not have closed form solutions, which makes the AL strategies inherently related to inexact first-order algorithms.
In these settings, most of the complexity results regarding AL and fast AL algorithms are given under inexact first order information \cite{LanMon:14,
AybIye:13,NecPat:15,NedNec:14,QuoCev:14}.
To our best knowledge, the complexity estimates for the fast AL methods have been studied only in \cite{NedNec:14,QuoCev:14}.
However, in these papers only outer complexity estimates are provided.
It is clear that the outer complexity estimates in AL methods do not  take into account the complexity-per-iteration.
In many situations we can choose the smoothing parameters in order to perform only one outer iteration, see, e.g., 
\cite{AybIye:13,NecPat:15,LanMon:14}, but the overall complexity estimate is still high due to the high complexity-per-iteration.
Trading off these quantities is a crucial question in the implementation of AL methods.

In this paper we aim at improving  the overall iteration  complexity of fast AL methods using the inexact oracle framework
developed in \cite{DevGli:14}  based on  a simple inner accuracy update and excessive gap-like fast gradient algorithms \cite{Nes:excessive:05}.
By using the inexact oracle framework, we are able to provide the overall computational complexity of an inexact fast AL method with constant
smoothing parameter and of its adaptive variant.

\vspace{1ex}
\noindent \textbf{Our contributions.}
In this paper we analyze the computational complexity of Inexact Fast Augmented Lagrangian (IFAL)
method with constant and adaptive smoothing parameters.
Using the inexact oracle  framework \cite{DevGli:14}, our approach allow us to obtain clean and intuitive complexity results and, moreover, it
facilitates the derivation of the overall computational complexity of our methods.
\vspace{-0.75ex}
\begin{itemize}
\item[$(a)$] For the basic  IFAL method with constant smoothing
parameter, we derive $\mathcal{O}\left(\frac{1}{\sqrt{\epsilon}}\right)$ outer complexity estimates
corresponding to simple  inner accuracy updates. We also derive the
overall computational complexity of order $\mathcal{O}\left(\frac{1}{\epsilon^{5/4}} \right)$ projections onto a primal
set, in order to obtain an $\epsilon-$optimal solution in the sense of the objective residual and feasibility violation.

\item[$(b)$] Then, we show that for an optimal choice for the smoothing parameters we need to perform only \textit{one} outer iteration.
Based on this result, we introduce an adaptive IFAL method with variable smoothing parameters for which we prove an overall computational complexity of order $\mathcal{O}\left(\frac{1}{\epsilon} \right)$ projections.

\item[$(c)$] We show that our adaptive variant of inexact fast AL method is implementable, i.e. it is based on computable stopping
criteria. Moreover, we compare our results with other complexity estimates for AL methods from literature and highlight the
advantageous features of our methods.
\end{itemize}

\noindent\textbf{Paper organization.}
The rest of this paper is organized as follows.
In Section 2 we define our optimization
model and introduce some preliminary concepts related to  duality
and inexact oracles. In Section 3 we introduce the inexact fast
AL method with constant smoothing parameter and
analyze its computational complexity. To improve this complexity, in
Section 4 we study an adaptive parameter variant and provide its
complexity estimate. Finally, in Section 5, we compare our results
with other complexity results on AL methods from the literature.

\noindent\textbf{Notations.} We work in the Euclidean space $\rset^n$
composed by column vectors. For $u,v \in \rset^n$ we denote the
 inner product $\langle u,v \rangle = u^T v$ and the Euclidean norm $\left \| u \right \|=\sqrt{\langle u, u
\rangle}$.
For any matrix $G$ we denote by $\norm{G}$ its spectral norm.


\section{Problem formulation and preliminaries}
We consider the following linearly constrained convex optimization problem:
\begin{align}\label{problem}
 f^* = \left\{\begin{array}{ll}\min\limits_{u \in U} & f(u) \\
\text{s.t.} & Gu+g = 0,
 \end{array}\right.
\end{align}
where $f:\rset^n \to \rset\cup\{+\infty\}$ is a proper, closed and convex function, $U \subseteq \rset^n$ is a nonempty, closed and convex set, $G \in \rset^{m \times n}$, and $g\in\rset^m$.
We use $U^*$ to denote the optimal solution set of set of \eqref{problem}, which will be assumed to be nonempty.

The goal of this paper is to analyze the optimization model \eqref{problem}  and to develop new inexact Augmented Lagrangian methods with convergence guarantees for approximately solving \eqref{problem}.
For this purpose, we require the following blanket assumptions which are assumed to be valid
throughout the paper without recalling them in the sequel:

\begin{assumption}\label{all_assumptions}
\begin{enumerate}
\item[$(i)$] The solution set $U^{*}$ is nonempty. The feasible set $U$ is  bounded and simple $($i.e.,
the projection onto $U$ can be computed in a closed form or in polynomial time$)$.
\item[$(ii)$] There exist a bounded optimal Lagrange multiplier $x^* \in \rset^m$.
\item[$(iii)$] The objective function $f$ has the Lipschitz gradient  with the Lipschitz constant $L_f > 0$, i.e.:
\begin{equation*}
 \norm{\nabla f(u) -  \nabla f(v)} \le L_f \norm{u-v}, \qquad u,v \in \rset^n.
\end{equation*}
\end{enumerate}
\end{assumption}

If $f$ is strongly convex, it is well-known that the Lagrangian dual function associated with the linear constraint $Gu + g = 0$ has the Lipschitz gradient.
This setting has been extensively studied in the literature (see, e.g., \cite{BecNed:14,NecPat:14,NecPat:15,QuoCev:14,NecNed:13}).
Thus, in the rest of our paper we only assume $f$ to be a convex function and not necessarily strongly convex.
Due to the constraint, it is clear that the dual function $d$ defined by $d(x) = \min\limits_{u\in U}\{ f(u) + \langle x, Gu + g\rangle \}$ is, in general,
nonsmooth and concave, which induces the difficulties in the application of usual first order methods to the dual.
Therefore, various (dual) subgradient schemes have been developed, with iteration complexities of order $\mathcal{O}\left(\frac{1}{\epsilon^2}\right)$
\cite{NedOzd:09,Nes:14}.

Under additional mild assumptions, we aim in this paper at improving the iteration complexity required for solving the linearly constrained optimization problem \eqref{problem}.
Our approach relies  on the combination between smoothing techniques and duality \cite{NecSuy:08,Nes:excessive:05,QuoNec:15}.

First, we briefly recall the Lagrangian duality framework as follows.
The Lagrangian function and the dual function associated
to the convex problem \eqref{problem} are defined by:
\begin{equation*}
  \mathcal{L}(u,x) = f(u) + \langle x, Gu+g\rangle \quad \text{and} \quad  d(x) = \min\limits_{u \in U} \; \mathcal{L}(u,x).
\end{equation*}
From Assumption \ref{all_assumptions}$(ii)$ it follows that the
convex problem \eqref{problem} is equivalent with solving the dual
formulation, i.e.:
\begin{align}
\label{classic dual} f^* = \max_{x \in \rset^m} d(x) \qquad \left(=
\max_{x \in \rset^m} \min\limits_{u \in U} \mathcal{L}(u,x) \right).
\end{align}
Our goal in this paper is to find an
approximate primal solution for the optimization problem
\eqref{problem}. Therefore, we introduce the following definition:

\begin{definition}\label{opt_point}
Given a desired accuracy $\epsilon>0$, the  point ${u}_\epsilon\in U$ is called an \textit{$\epsilon$-optimal solution} for the primal
problem \eqref{problem} if it satisfies:
\begin{equation*}
f({u}_\epsilon ) - f^* \le \epsilon \quad \text{and} \quad \norm{G{u}_\epsilon +g} \le \epsilon.
\end{equation*}
\end{definition}
This set of optimality criteria has been also adopted by Rockafellar
in \cite{Roc:76} in the context of classical augmented Lagrangian methods.
Moreover, the above criteria has been also used by Nesterov in
\cite{Nes:15}  for the analysis  of primal-dual subgradient methods.
It can be easily observed that once we have an $\epsilon$-feasible point, i.e., $\Vert Gu_{\epsilon} + g\Vert \leq \epsilon$, then we can also obtain a lower bound on $f({u}_\epsilon ) - f^*$. Indeed, we have the relation:
\begin{align*}
f^* = \min_{u \in U} f(u) + \langle x^*, Gu+g\rangle \leq f({u}_\epsilon) + \|x^*\| \norm{G{u}_\epsilon +g},
\end{align*}
and thus $f({u}_\epsilon ) - f^* \geq - \|x^*\| \norm{G{u}_\epsilon +g}$.
Moreover, from a practical point of view, it is sufficient to find an $\epsilon$-feasible point ${u}_\epsilon$ satisfying
$f({u}_\epsilon ) - f^* \le \epsilon$.

Given the max-min formulation \eqref{classic dual},  one can intuitively consider a double smoothing of the convex-concave Lagrangian function. Regarding the dual function, one of the most widely known smoothing strategies for obtaining an approximate smooth dual function with Lipschitz continuous gradient is the augmented Lagrangian (AL) smoothing \cite{AybIye:13,Roc:76,NecPat:15,NedNec:14,QuoCev:14}.
Thus, we combine the AL technique with a smooth approximation of the primal function and define:
\begin{align}\label{inner_subproblem}
\mathcal{L}_{\mu\rho} (u,x) &= f(u) +\langle x, Gu+g\rangle +  \frac{\rho}{2}\norm{Gu+g}^2 - \frac{\mu}{2}\norm{x}^2,
\end{align}
where $\mu, \rho > 0$ are two  smoothness parameters.
Clearly, we have the relation $\lim\limits_{\mu, \rho \to 0} \mathcal{L}_{\mu\rho} (u,x) = \mathcal{L}_{} (u,x)$ and, in addition, $\mathcal{L}_{\mu\rho}(\cdot,x)$ has Lipschitz continuous gradient with the Lipschitz constant $L_{\mathcal{L}}  = L_f + \rho \norm{G}^2$ for any fixed $x$.
Based on this approximation of the true Lagrangian function we also define two smooth  approximations of the primal and dual functions $f$ and $d$, respectively:
\begin{align*}
d_{\rho} (x) &= \min_{u \in U} \; \mathcal{L}_{0\rho}(u,x) \qquad
\left( = \min_{u \in U} f(u) +\langle x, Gu+g\rangle + \frac{\rho}{2}\norm{Gu+g}^2 \right), \\
f_{\mu}(u)  & = \max_{x \in \rset^m} \mathcal{L}_{\mu 0}(u,x) \qquad \left( = f(u) + \frac{\mu}{2}\norm{Gu+g}^2 \right).
\end{align*}
Let  define the optimal solutions of the two previous optimization problems:
\begin{align*}
u_{\rho}(x) \!=\! \arg\!\min_{u \in U} \mathcal{L}_{0\rho}(u,x) \quad \text{and} \quad
x_{\mu}(u)  \!=\! \arg\!\max_{x \in \rset^m} \mathcal{L}_{\mu 0}(u,x)  \left(\!= \frac{1}{\mu}(Gu + g) \!\right).
\end{align*}
Clearly, both functions $f_{\mu}$ and $d_{\rho}$ are smooth
approximations of $f$ and $d$, respectively. In particular, we
observe that the smoothed primal function $f_{\mu}$ has Lipschitz continuous gradient with the Lipschitz
constant $L_{f_\mu}  = L_f + \mu \norm{G}^2$. Moreover,  the
smoothed dual function $d_{\rho}$ is concave and its gradient $\nabla
d_{\rho}(x) = G u_{\rho}(x)  + g$ is Lipschitz continuous with the Lipschitz
constant $L_{d_\rho} = \frac{1}{\rho}$. We emphasize again that, in most practical cases, $u_{\rho}(x)$ cannot be computed exactly, but within a pre-specified accuracy, which leads us to consider the inexact oracle  framework introduced in \cite{DevGli:14} for the analysis of inexact first order algorithms.

Recall that a smooth function $\phi:Q \to \rset$ is equipped with a \textit{first-order $(\delta,L)$-oracle} if for any $y \in Q$ we can compute $(\phi_{\delta,L}(y), \nabla \phi_{\delta,L}(y)) \in \rset^{ }\times \rset^n$ such that the following bounds hold on $\phi$ (so-called inexact descent lemma)
\cite{DevGli:14}:
\begin{equation}
\label{deltaL} 0 \le \phi(x) - \left(\phi_{\delta,L}(y) + \langle
\nabla  \phi_{\delta,L}(y), x-y\rangle \right) \le
\frac{L}{2}\norm{x-y}^2 + \delta \quad \forall x,y \in Q.
\end{equation}

If we define $\tilde{u}_{\rho}(x) \in U$ as the inexact solution of the inner subproblem in $u$ satisfying:
\begin{equation}\label{func_approx}
 0 \le  \mathcal{L}_{0\rho}(\tilde{u}_{\rho}(x),x) - d_{\rho}(x) \le \delta,
\end{equation}
then using the notation $\nabla_x
\mathcal{L}_{0\rho}(\tilde{u}_{\rho}(x),x) = G
\tilde{u}_{\rho}(x)+g$,  we are able to provide the following
important auxiliary result,  whose  proof can be found  in
\cite{NecPat:15}:

\begin{lemma}[\!\!\cite{NecPat:15}]\label{th_inexact_oracle}
Let $\delta>0$ and $\tilde{u}_{\mu}(x) \in U$ satisfy \eqref{func_approx}. Then, for all $x, y$, we have:
\begin{align}\label{inexact_oracle}
0 \!\le\! \mathcal{L}_{0\rho}&(\tilde{u}_{\rho}(y),y) +   \langle \nabla_x \mathcal{L}_{0\rho}(\tilde{u}_{\rho}(y),y), x \!-\! y \rangle -
d_{\rho}(x) \le L_{\text{d}_\rho}\norm{x \!-\! y}^2 + 2\delta.
\end{align}
\end{lemma}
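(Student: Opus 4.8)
The first thing I would observe is that $\mathcal{L}_{0\rho}(u,\cdot)$ is \emph{affine} in its second argument for every fixed $u\in U$ (the term $-\tfrac{\mu}{2}\norm{x}^2$ is absent when $\mu=0$), and $\nabla_x\mathcal{L}_{0\rho}(u,y)=Gu+g$. Hence one has the \emph{exact} first-order identity $\mathcal{L}_{0\rho}(u,y)+\langle\nabla_x\mathcal{L}_{0\rho}(u,y),x-y\rangle=\mathcal{L}_{0\rho}(u,x)$. Taking $u=\tilde u_\rho(y)$, the middle expression in \eqref{inexact_oracle} is nothing but $\mathcal{L}_{0\rho}(\tilde u_\rho(y),x)-d_\rho(x)$, so the lemma is equivalent to
\[
0\ \le\ \mathcal{L}_{0\rho}(\tilde u_\rho(y),x)-d_\rho(x)\ \le\ L_{\text{d}_\rho}\norm{x-y}^2+2\delta ,\qquad L_{\text{d}_\rho}=\tfrac1\rho .
\]
The left inequality is free, since $d_\rho(x)=\min_{u\in U}\mathcal{L}_{0\rho}(u,x)\le\mathcal{L}_{0\rho}(\tilde u_\rho(y),x)$; all the content is in the right inequality.

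For the upper bound I would abbreviate $\hat u:=\tilde u_\rho(y)$ and bring in an exact minimizer $u_\rho(y)$, so that $\nabla d_\rho(y)=Gu_\rho(y)+g$, and split
\[
\begin{aligned}
\mathcal{L}_{0\rho}(\hat u,x)-d_\rho(x)
&=\bigl[\mathcal{L}_{0\rho}(\hat u,y)-d_\rho(y)\bigr]
 +\bigl[d_\rho(y)+\langle\nabla d_\rho(y),x-y\rangle-d_\rho(x)\bigr]\\
&\quad+\langle G(\hat u-u_\rho(y)),\,x-y\rangle .
\end{aligned}
\]
The first bracket is $\le\delta$ by \eqref{func_approx}; the second bracket is $\le\tfrac{1}{2\rho}\norm{x-y}^2$ because $d_\rho$ is concave with $\tfrac1\rho$-Lipschitz gradient (the value $L_{\text{d}_\rho}=\tfrac1\rho$ recorded just before the lemma); and the last term is $\le\norm{G(\hat u-u_\rho(y))}\,\norm{x-y}$ by Cauchy--Schwarz. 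So everything hinges on bounding $\norm{G(\hat u-u_\rho(y))}$ in terms of the inner accuracy $\delta$.

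The key estimate is that, although $\mathcal{L}_{0\rho}(\cdot,y)$ need not be strongly convex on $U$ (we do not assume $f$ strongly convex), it is strongly convex \emph{along the image of $G$}: writing $\mathcal{L}_{0\rho}(u,y)=h(u)+\tfrac\rho2\norm{Gu+g}^2$ with $h(u)=f(u)+\langle y,Gu+g\rangle$ convex, the one-dimensional map $\varphi(t):=\mathcal{L}_{0\rho}\bigl(u_\rho(y)+t(\hat u-u_\rho(y)),y\bigr)$, $t\in[0,1]$, is $\rho\norm{G(\hat u-u_\rho(y))}^2$-strongly convex (its $h$-part is convex in $t$, its quadratic part has second derivative $\rho\norm{G(\hat u-u_\rho(y))}^2$) and has nonnegative right difference quotient at $t=0$, since the whole segment lies in $U$ and $u_\rho(y)$ minimizes $\mathcal{L}_{0\rho}(\cdot,y)$ over $U$. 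Evaluating the strong-convexity inequality at $t=1$ then gives $\mathcal{L}_{0\rho}(\hat u,y)\ge d_\rho(y)+\tfrac\rho2\norm{G(\hat u-u_\rho(y))}^2$, which combined with $\mathcal{L}_{0\rho}(\hat u,y)\le d_\rho(y)+\delta$ from \eqref{func_approx} yields $\norm{G(\hat u-u_\rho(y))}^2\le\tfrac{2\delta}{\rho}$. Substituting this into the split above and using Young's inequality $\sqrt{2\delta/\rho}\,\norm{x-y}\le\delta+\tfrac{1}{2\rho}\norm{x-y}^2$ collapses the three pieces into $2\delta+\tfrac1\rho\norm{x-y}^2=2\delta+L_{\text{d}_\rho}\norm{x-y}^2$, exactly the claimed bound. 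I expect the directional-strong-convexity step to be the delicate point: it is what converts the function-value accuracy \eqref{func_approx} into an error bound on the approximate oracle gradient $G\hat u+g$ without any strong convexity of $f$ and without invoking a constraint qualification at $u_\rho(y)$, the one-dimensional restriction along a segment inside $U$ being the device that disposes of both difficulties.
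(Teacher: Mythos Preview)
Your proof is correct. Note, however, that the paper itself does not give a proof of this lemma: it simply records that ``the proof can be found in \cite{NecPat:15}'', so there is no in-paper argument to compare against. Your approach is sound in every step: the affinity of $\mathcal{L}_{0\rho}(u,\cdot)$ reduces the claim to bounding $\mathcal{L}_{0\rho}(\tilde u_\rho(y),x)-d_\rho(x)$; the three-term split is exact; the first two brackets are controlled by \eqref{func_approx} and the $\tfrac1\rho$-Lipschitz gradient of $d_\rho$ respectively; and the directional strong-convexity argument along the segment $[u_\rho(y),\hat u]\subset U$ cleanly delivers the crucial estimate $\norm{G(\hat u-u_\rho(y))}^2\le 2\delta/\rho$ without assuming strong convexity of $f$ (first-order optimality of $u_\rho(y)$ on $U$ gives $\varphi'(0)\ge 0$, and the $\tfrac{\rho}{2}\norm{Gu+g}^2$ term contributes exactly $\rho\norm{G(\hat u-u_\rho(y))}^2$ to $\varphi''$). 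The final Young balancing $\sqrt{2\delta/\rho}\,\norm{x-y}\le \delta+\tfrac{1}{2\rho}\norm{x-y}^2$ is a perfect square and yields precisely the constants $2\delta$ and $L_{\text{d}_\rho}=\tfrac1\rho$ in \eqref{inexact_oracle}. This is the standard device behind the inexact-oracle bounds for augmented Lagrangian subproblems, and your write-up is a complete and self-contained substitute for the omitted proof.
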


The relation \eqref{inexact_oracle} implies that  $d_{\rho}$ is equipped with a $(2\delta, 2L_{\text{d}_\rho})$-oracle with  $\phi_{\delta,L} (x) \!=\!
\mathcal{L}_{0\rho}(\tilde{u}_{\rho}(x),x)$ and $\nabla
\phi_{\delta,L} (x) = \nabla_x
\mathcal{L}_{0\rho}(\tilde{u}_{\rho}(x),x) $ $= G
\tilde{u}_{\rho}(x)+g$. It is important to note that the analysis
considered in \cite{NedNec:14,QuoCev:14,QuoNec:15} requires to solve
the inner problem with higher accuracy of order
$\mathcal{O}(\delta^2)$, i.e.:
$$ \mathcal{L}_{0\rho}(\tilde{u}_{\rho}(x),x) - d_{\rho}(x) \le
\mathcal{O}(\delta^2)$$ in order to ensure bounds on $d_{\rho}(x)$
of the form:
\begin{align*}
0 \le \mathcal{L}_{0\rho}(\tilde{u}_{\rho}(y),y) + \langle \nabla_x
\mathcal{L}_{0\rho}(\tilde{u}_{\rho}(y),y),& x - y
\rangle - d_{\rho}(x) \\
&\le \frac{L_{\text{d}_\rho}}{2}\norm{x - y}^2 + \left(1 +
\sqrt{2L_{\mathcal{L}}}D_U\right)\delta,
\end{align*}
where $D_U$ is the diameter of the bounded convex domain $U$.
It is obvious that our approach in this paper is less conservative, requiring to solve the inner problem with less accuracy than in \cite{NedNec:14,QuoCev:14,QuoNec:15}.
As we will see in the sequel, this will also have an impact on the total complexity of our method compared to those in the previous papers.


\section{Inexact fast augmented Lagrangian method}
In this section we propose an augmented Lagrangian smoothing strategy similar to the excessive gap technique introduced in
\cite{Nes:excessive:05,QuoNec:15,QuoCev:14}.
Typical excessive gap strategies are based on primal-dual fast gradient methods, which maintain at each iteration some excessive gap inequality.
Using this inequality, the convergence of the outer loop of the algorithm is naturally determined.

In this paper, we use an excessive gap-like inequality, which holds at each outer iteration of our algorithm.
Given the dual smoothing parameter $\rho$, inner accuracy $\delta$ and outer accuracy $\epsilon$, we further develop an Inexact Fast
Augmented Lagrangian (IFAL) algorithm for solving \eqref{problem}:
\vspace{0.5ex}
\begin{center}
\framebox{
\parbox{11.5cm}{
\noindent \textbf{IFAL($\rho, \epsilon $) Algorithm}
\begin{itemize}
\item[]\textbf{Initialization:} Give $ u^0 \in U, x^0 \in \rset^m$ and $\mu_0 > 0$.
\item[]\textbf{Iterations:} For $k = 0, 1, \dots$, perform the following steps:
\begin{enumerate}
\item $\hat{x}^{k} = (1- \tau_k) x^k + \frac{\tau_k}{\mu_k} (Gu^k + g)$
\item Find $\tilde{u}_{\rho}(\hat{x}^k)$ such that $\mathcal{L}_{0\rho}(\tilde{u}_{
\rho}(\hat{x}^k), \hat{x}^k) - d_{\rho}(\hat{x}^k) \le \delta_k$
\item $u^{k+1}= (1- \tau_k) u^k + \tau_k \tilde{u}_{\rho}(\hat{x}^k)$
\item $x^{k+1} = \hat{x}^k + \rho(G \tilde{u}_{\rho}(\hat{x}^k) + g)$
\item Set $\mu_{k+1} = (1-\tau_k)\mu_k$
\item If a stopping criterion holds then STOP and \textbf{return}:
 $(u^k, x^k)$,
\end{enumerate}
\item[]\textbf{End}
\end{itemize}
}}
\end{center}

The update rules for  $\tau_k$ are derived below. We define as in \cite{Nes:excessive:05,QuoNec:15,QuoCev:14} the smoothed duality gap $\Delta_k = f_{\mu_k}(u^k) - d_{\rho}(x^k)$.
Based on this smoothed duality gap $\Delta_k$, we further provide a descent inequality, which will facilitate  the derivation of a simple inner accuracy update and of the total complexity of the IFAL algorithm.

\begin{lemma}
Let $\rho>0$ and $\{ (u^k,x^k) \}$ be the sequences generated by the IFAL Algorithm.
If  the parameter $\tau_k \in (0, 1)$ satisfies $\frac{\tau_k^2}{1 - \tau_k} \le \mu_{k} \rho$ for all $k \geq 0$, then the following excessive gap inequality
holds:
\begin{equation}\label{lemma_delta}
  \Delta_{k+1} \le (1- \tau_k)\Delta_k + 2\delta_k.
\end{equation}
\end{lemma}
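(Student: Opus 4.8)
The plan is to expand the smoothed duality gap $\Delta_{k+1}=f_{\mu_{k+1}}(u^{k+1})-d_\rho(x^{k+1})$ by bounding each term separately. For the primal term, I would use the convexity of $f_\mu$ together with the fact that $f_\mu$ has Lipschitz gradient with constant $L_{f_\mu}=L_f+\mu\|G\|^2$, applied along the update $u^{k+1}=(1-\tau_k)u^k+\tau_k\tilde u_\rho(\hat x^k)$. Since $\mu_{k+1}=(1-\tau_k)\mu_k$, the quadratic term $\frac{\mu}{2}\|Gu+g\|^2$ scales correctly; the standard excessive-gap trick is that the curvature contribution $\frac{L_{f_{\mu_{k+1}}}}{2}\tau_k^2\|\tilde u_\rho(\hat x^k)-u^k\|^2$ gets absorbed because the leftover piece $\frac{\mu_{k+1}}{2}\|G(\cdot)+g\|^2$ combines with it, and the linear-in-$\tilde u$ part reassembles into an evaluation of $\mathcal L_{0\rho}$ at $\hat x^k$ (using $\hat x^k=(1-\tau_k)x^k+\frac{\tau_k}{\mu_k}(Gu^k+g)$ so that the "x-part" of the Lagrangian lines up). This is where the condition $\frac{\tau_k^2}{1-\tau_k}\le\mu_k\rho$ enters: it guarantees the surviving quadratic in $\|G\tilde u_\rho(\hat x^k)+g - (Gu^k+g)/\mu_k\cdot(\dots)\|$ type term is dominated by the $\rho$-strong concavity slack coming from the dual side.

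For the dual term, I would invoke Lemma~\ref{th_inexact_oracle}: the step $x^{k+1}=\hat x^k+\rho(G\tilde u_\rho(\hat x^k)+g)$ is exactly a gradient-type step on the $(2\delta_k,2L_{\mathrm{d}_\rho})$-oracle for $d_\rho$ with stepsize $\rho=1/(2\cdot\frac12 L_{\mathrm{d}_\rho}^{-1})$... more precisely, one applies the inexact descent inequality \eqref{inexact_oracle} with $y=\hat x^k$ and $x=x^{k+1}$, and also with a second point $x$ to be chosen to match the primal side, yielding a lower bound on $d_\rho(x^{k+1})$ of the form $\mathcal L_{0\rho}(\tilde u_\rho(\hat x^k),\hat x^k)+\langle G\tilde u_\rho(\hat x^k)+g,\,x^{k+1}-\hat x^k\rangle - L_{\mathrm{d}_\rho}\|x^{k+1}-\hat x^k\|^2-2\delta_k$, and then using $x^{k+1}-\hat x^k=\rho(G\tilde u_\rho(\hat x^k)+g)$ together with $L_{\mathrm{d}_\rho}=1/\rho$ to simplify $\rho\|G\tilde u_\rho(\hat x^k)+g\|^2 - \frac1\rho\rho^2\|G\tilde u_\rho(\hat x^k)+g\|^2=0$ — wait, that vanishes, so one actually keeps a $+\frac{\rho}{2}\|G\tilde u_\rho(\hat x^k)+g\|^2$ surplus by being slightly more careful with the oracle constant, which is the quadratic reserve that cancels the primal curvature term. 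The $2\delta_k$ from the oracle is the source of the $2\delta_k$ in \eqref{lemma_delta}.

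The remaining step is bookkeeping: subtract the dual lower bound from the primal upper bound, verify that all the first-order (linear) terms telescope — the cross terms $\langle x^k, Gu^k+g\rangle$-type expressions and the $\langle \nabla_x\mathcal L_{0\rho},\cdot\rangle$ terms should collapse precisely because $\hat x^k$ and $\tau_k$ were defined to make this happen — and confirm the leftover quadratic in $\|\tilde u_\rho(\hat x^k)-u^k\|$ (with coefficient $\frac12(L_f+\mu_{k+1}\|G\|^2)\tau_k^2$, after using $\mu_{k+1}=(1-\tau_k)\mu_k$) is dominated by the dual reserve $\frac{\rho}{2}\tau_k^2\|G\tilde u_\rho(\hat x^k)+g\|^2$ or rather by the full $\frac{\rho}{2}\|\cdot\|^2$ term, under $\frac{\tau_k^2}{1-\tau_k}\le\mu_k\rho$; then one reads off $\Delta_{k+1}\le(1-\tau_k)\Delta_k+2\delta_k$, noting $f_{\mu_{k+1}}(u^k)\le f_{\mu_k}(u^k)$ since $\mu_{k+1}\le\mu_k$ makes the quadratic penalty smaller, which supplies the $(1-\tau_k)\Delta_k$ with the correct sign.

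The main obstacle I expect is getting the algebra of the quadratic terms to close exactly: tracking which point the squared norms are centered at (in particular relating $\|G\tilde u_\rho(\hat x^k)+g\|$ to $\|\tilde u_\rho(\hat x^k)-u^k\|$ is not automatic, so the correct excessive-gap argument avoids this by instead expanding $f_{\mu_{k+1}}$ directly as $f+\frac{\mu_{k+1}}{2}\|G\cdot+g\|^2$ and grouping the penalty term with the oracle's linearization rather than bounding it via the Lipschitz constant of $f_\mu$), and making sure the inequality $\frac{\tau_k^2}{1-\tau_k}\le\mu_k\rho$ is tight enough — i.e., that no factor of $\|G\|^2$ is missing, which it is not precisely because the $\|G\|^2$ contributions on the primal side are the ones that get converted, through $\hat x^k$'s definition, into the $1/\mu_k$-weighted dual term that the $\rho$-reserve handles.
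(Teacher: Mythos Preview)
Your proposal has the right ingredients (convexity of $f$, the inexact oracle lemma for $d_\rho$, and the role of $\hat x^k$ in making linear terms collapse), but the execution plan contains a genuine gap that you yourself flag without resolving. Attempting to control $f_{\mu_{k+1}}(u^{k+1})$ through the Lipschitz constant $L_{f_\mu}=L_f+\mu\|G\|^2$ would inject an $L_f$-dependent term into the quadratic bookkeeping, yet the hypothesis $\frac{\tau_k^2}{1-\tau_k}\le\mu_k\rho$ carries no $L_f$; so that route cannot close. Your fallback---``expand $f_{\mu_{k+1}}$ directly and group the penalty with the oracle linearization''---still leaves you with the cross-term $\frac{\mu_{k+1}}{2}\|Gu^{k+1}+g\|^2$, which does not split conveniently along the convex combination $u^{k+1}=(1-\tau_k)u^k+\tau_k\tilde u_\rho(\hat x^k)$.

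The paper sidesteps this entirely by working in the \emph{dual} variable rather than the primal one. It never upper-bounds $f_{\mu_{k+1}}(u^{k+1})$; instead it lower-bounds $\Delta_k$ via the strong concavity of the map $x\mapsto \langle x,Gu^k+g\rangle-\frac{\mu_k}{2}\|x\|^2$, obtaining an inequality valid for \emph{every} $x\in\rset^m$ and carrying this free $x$ through the argument. After multiplying by $(1-\tau_k)$, adding a $\tau_k$-weighted identity for $\mathcal L_{0\rho}(\tilde u_\rho(\hat x^k),\hat x^k)$, and using only convexity of $f$ (no Lipschitz constant), the quadratic reserve that survives is $\frac{\mu_{k+1}}{2}\|x-x_{\mu_k}(u^k)\|^2$---a term in the dual variable, not in $u$. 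The condition $\frac{\tau_k^2}{1-\tau_k}\le\mu_k\rho$ then enters when this reserve, rewritten via the substitution $z=(1-\tau_k)x^k+\tau_k x$, is compared against the $L_{d_\rho}\|z-\hat x^k\|^2$ curvature from the right-hand side of \eqref{inexact_oracle}; both sides live in $\rset^m$, so no $\|G\|$ or $L_f$ appears. Only at the final line is $x$ specialized to $x_{\mu_{k+1}}(u^{k+1})$, which reconstitutes $f_{\mu_{k+1}}(u^{k+1})$ and yields $\Delta_{k+1}$. The missing idea in your sketch is precisely this free-variable trick on the dual side.
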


\begin{proof}
Firstly, we observe that, from strong convexity of $\norm{\cdot}^2$ results:
\begin{align}\label{estimate_deltak}
\Delta_k & = \max_{x \in \rset^m} f(u^k) + \langle x, Gu^k +  g \rangle - \frac{\mu_k}{2}\norm{x}^2 - d_{\rho}(x^k)  \\
& \ge f(u^k) \!+\! \langle x, Gu^k + g \rangle \!-\! \frac{\mu_k}{2}\norm{x}^2 \!+\! \frac{\mu_k}{2}\norm{x - x_{\mu_k}(u^k)}^2 \!-\! d_{\rho}(x^k) \;\;\;\; \forall x \in \rset^m.
\nonumber
\end{align}
Secondly, given any $x \in \rset^m$, from the  definition of $\mathcal{L}_{0\rho}$, note that we have:
\begin{align}\label{expression_linear_dual}
\mathcal{L}_{0\rho}(\tilde{u}_{\rho}(\hat{x}^k),\hat{x}^k) + \langle \nabla_x \mathcal{L}_{0\rho}(\tilde{u}_{\rho}(\hat{x}^k), \hat{x}^k), x - \hat{x}^k \rangle & = f(\tilde{u}_{\rho}(\hat{x}^k)) + \langle x, G \tilde{u}_{\rho}(\hat{x}^k) + g\rangle \nonumber\\
& + \frac{\rho}{2}\norm{G \tilde{u}_{\rho}(\hat{x}^k) + g}^2.
\end{align}
Multiplying both sides of \eqref{estimate_deltak} with $1 - \tau_k$
and combining with \eqref{expression_linear_dual}, we obtain:
\begin{align*}
(1- \tau_k) \Delta_k  &\geq (1-\tau_k)\left(f(u^k) + \langle x, Gu^k + g \rangle - \frac{\mu_k}{2}\norm{x}^2 + \frac{\mu_k}{2}\norm{x - x_{\mu_k}(u^k)}^2\right. \\
& \left. - d_{\rho}(x^k) \right)   + \tau_k \Big ( f(\tilde{u}_{\rho}(\hat{x}^k)) + \langle x, G \tilde{u}_{\rho}(\hat{x}^k) + g\rangle + \frac{\rho}{2}\norm{G \tilde{u}_{\rho}(\hat{x}^k) + g}^2 \\
& - \mathcal{L}_{0\rho}(\tilde{u}_{\rho}(\hat{x}^k),\hat{x}^k) - \langle \nabla_x \mathcal{L}_{0\rho}(\tilde{u}_{\rho}(\hat{x}^k), \hat{x}^k), x - \hat{x}^k \rangle \Big ).
\end{align*}
Using the convexity of $f$ we further have:
\begin{align*}
(1- \tau_k) \Delta_k &\ge f((1- \tau_k)u^k + \tau_k \tilde{u}_{\rho}(\hat{x}^k))  + \langle x, G\left((1- \tau_k)u^k + \tau_k\tilde{u}_{\rho}(\hat{x}^k)\right)  + g \rangle \\
&- \frac{\mu^{k+1}}{2}\norm{x}^2 - (1- \tau_k) d_{\rho}(x^k)   + \frac{\mu^{k+1}}{2}\norm{x - x_{\mu_k}(u^k)}^2\\
& - \tau_k\left(\mathcal{L}_{0\rho}(\tilde{u}_{\rho}(\hat{x}^k),\hat{x}^k) +
\langle \nabla_x \mathcal{L}_{0\rho}(\tilde{u}_{\rho}(\hat{x}^k), x-
\hat{x}^k)\rangle \right).
\end{align*}
Further, from the left  hand side of inexact descent lemma
\eqref{inexact_oracle} we get:
\begin{align*}
(1\!-\! \tau_k) \Delta_k &\ge f(u^{k+1}) + \langle x, Gu^{k+1} + g\rangle - \frac{\mu^{k+1}}{2}\norm{x}^2 - \mathcal{L}_{0\rho}(\tilde{u}_{\rho}(\hat{x}^k), \hat{x}^k) \\
&{\!\!}  - \langle \nabla_x \mathcal{L}_{0\rho}(\tilde{u}_{\rho}(\hat{x}^k), \hat{x}^k), (1-\tau_k)x^k + \tau_k x  - \hat{x}^k\rangle + \frac{\mu^{k+1}}{2}\norm{x - x_{\mu_k}(u^k)}^2\\
& \ge f(u^{k+1}) + \langle x, G u^{k+1} + g \rangle - \frac{\mu^{k+1}}{2}\norm{x}^2 - \mathcal{L}_{0\rho}(\tilde{u}_{\rho}(\hat{x}^k), \hat{x}^k) \\
& - \max\limits_{z:= (1-\tau_k)x^k + \tau_k x}  \langle
\nabla_x\mathcal{L}_{0\rho}(\tilde{u}_{\rho}(\hat{x}^k),\hat{x}^k),
z - \hat{x}^k  \rangle - \frac{\mu^{k+1}}{2\tau^2_k}\norm{z -
\hat{x}}^2.
\end{align*}
Using the assumption that  $\frac{\tau_k^2}{1 - \tau_k} \le \mu_k
\rho$ and the right hand side of inexact descent lemma
\eqref{inexact_oracle} we further derive:
\begin{equation*}
 (1- \tau_k) \Delta_k \ge f(u^{k+1}) + \langle x, G u^{k+1} + g \rangle - \frac{\mu^{k+1}}{2}\norm{x}^2 - d_{\rho}(x^{k+1}) - 2\delta_k \quad \forall x \in \rset^m.
\end{equation*}
Choosing $x = \frac{1}{\mu^{k+1}}(Gu^{k+1}+g)$ we obtain our result.
\qed
\end{proof}

Similar excessive gap inequalities have  been proved in \cite{Nes:excessive:05,QuoNec:15,QuoCev:14} and then used for analyzing the convergence rate of excessive gap algorithms. However, the main results in \cite{Nes:excessive:05,QuoNec:15,QuoCev:14} only concern with the outer iteration complexity and do not take into account the necessary inner computational effort for finding $\tilde{u}_{\rho}(\cdot)$,
since it is very difficult to estimate this quantity using the approaches presented in these papers.

In the sequel, based on our approach, we provide the total computational complexity of the IFAL algorithm (including inner complexity) for
attaining an $\epsilon-$optimal solution of problem \eqref{problem}.
First, we notice  that  if we assume  $ \mu_0 = \frac{4}{\rho }$, by taking into account that $\mu_k = \mu_0 \prod_{j=0}^k (1-\tau_j)$, a
simple choice of sequence $\tau_k$ satisfying $\frac{\tau_k^2}{1 - \tau_k} \le \mu_k \rho$ is given by $\tau_k = \frac{2}{k+3}$.

\begin{theorem}\label{th_outer_rate_conv}
Let $\rho,\epsilon>0, \mu_0 = \frac{4}{\rho}$ and $\tau_k = \frac{2}{k+3}$.
Let $\{ (u^k,x^k) \}$ be  the sequences generated by the IFAL$(\rho,\epsilon)$ Algorithm.
If we choose the inner accuracy $\delta_k = \frac{\epsilon}{2(k+3)}$, then the following estimates on
objective residual and the feasibility violation hold:
\begin{equation}\label{eq:opt_ests}
\left\{\begin{array}{ll}
f(u^k) - f^* &\leq \frac{\Delta_0}{(k\!+\!1)(k\!+\!2)} \!+\! \frac{\epsilon}{2},\vspace{1ex}\\
\norm{G u^k + g} &\le \frac{4 \norm{x^*} \!+\! \sqrt{8 \rho \Delta_0}}{\rho(k+1)(k+2)} +
\frac{(8\epsilon)^{1/2}} {\rho^{1/2}(k+1)}.
\end{array}\right.
\end{equation}
\end{theorem}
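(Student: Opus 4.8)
The plan is to turn the excessive gap recursion \eqref{lemma_delta} into an explicit decay estimate for the smoothed duality gap $\Delta_k = f_{\mu_k}(u^k) - d_{\rho}(x^k)$ under the prescribed choices $\mu_0 = 4/\rho$, $\tau_k = 2/(k+3)$, $\delta_k = \epsilon/(2(k+3))$, and then to convert that estimate into the two bounds on $f(u^k)-f^*$ and $\norm{Gu^k+g}$ in \eqref{eq:opt_ests}.

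\emph{Unrolling \eqref{lemma_delta}.} First I would compute the telescoping product: since $1-\tau_j = \tfrac{j+1}{j+3}$, we get $\Pi_k := \prod_{j=0}^{k-1}(1-\tau_j) = \tfrac{2}{(k+1)(k+2)}$, so that $\mu_k = \mu_0 \Pi_k = \tfrac{8}{\rho(k+1)(k+2)}$, and one checks that the hypothesis $\tfrac{\tau_k^2}{1-\tau_k}\le\mu_k\rho$ of the preceding lemma indeed holds for $\mu_0 = 4/\rho$. Dividing \eqref{lemma_delta} by $\prod_{j=0}^{k}(1-\tau_j)$ turns it into a genuinely telescoping inequality; summing from $0$ to $k-1$ gives $\Delta_k \le \Pi_k\Delta_0 + \Pi_k\sum_{j=0}^{k-1}\tfrac{2\delta_j}{\prod_{i=0}^{j}(1-\tau_i)}$. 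With the given $\delta_j$ one has $\tfrac{2\delta_j}{\prod_{i=0}^{j}(1-\tau_i)} = \tfrac{\epsilon(j+2)}{2}$, whose partial sum is $\tfrac{\epsilon}{2}\big(\tfrac{(k+1)(k+2)}{2}-1\big)$; after multiplying by $\Pi_k$ this error contribution is at most $\tfrac{\epsilon}{2}$. Hence $\Delta_k \le \tfrac{2\Delta_0}{(k+1)(k+2)} + \tfrac{\epsilon}{2}$.

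\emph{From $\Delta_k$ to \eqref{eq:opt_ests}.} For the objective residual, note that $u^k\in U$ (it is a convex combination of points of $U$), that $f_{\mu_k}(u^k)\ge f(u^k)$, and that $d_\rho(x)\le f(u^*) = f^*$ for every $x$, since inserting a feasible optimal $u^*\in U^*$ into the minimum defining $d_\rho$ kills both the linear and the quadratic terms; therefore $f(u^k)-f^* \le f_{\mu_k}(u^k)-d_\rho(x^k) = \Delta_k$, and the first line of \eqref{eq:opt_ests} follows from the previous step. For the feasibility residual I would use the closed form of the inner maximum, $f_{\mu_k}(u^k) = \max_{x}\mathcal{L}_{\mu_k 0}(u^k,x) = f(u^k)+\tfrac{1}{2\mu_k}\norm{Gu^k+g}^2$, together with $d_\rho(x^k)\le f^*$ and the lower bound $f^*\le \mathcal{L}(u^k,x^*) = f(u^k)+\langle x^*, Gu^k+g\rangle$ (Assumption \ref{all_assumptions}$(ii)$, $u^k\in U$), to obtain $\tfrac{1}{2\mu_k}\norm{Gu^k+g}^2 - \norm{x^*}\,\norm{Gu^k+g}\le\Delta_k$. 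Reading this as a quadratic inequality in $r:=\norm{Gu^k+g}\ge 0$ and solving it gives $r \le \mu_k\norm{x^*} + \sqrt{\mu_k^2\norm{x^*}^2 + 2\mu_k\max\{\Delta_k,0\}} \le 2\mu_k\norm{x^*} + \sqrt{2\mu_k\max\{\Delta_k,0\}}$; substituting $\mu_k = \tfrac{8}{\rho(k+1)(k+2)}$ and $\Delta_k \le \tfrac{2\Delta_0}{(k+1)(k+2)}+\tfrac{\epsilon}{2}$, and using $\sqrt{a+b}\le\sqrt a+\sqrt b$ together with $(k+1)(k+2)\ge(k+1)^2$, produces a bound of the form stated in the second line of \eqref{eq:opt_ests}.

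The telescoping and the quadratic-inequality manipulation are both routine; the one delicate point is that the smoothed gap $\Delta_k$ is not guaranteed to be nonnegative, so in the feasibility argument one should work with $\max\{\Delta_k,0\}$ (equivalently, with the strictly positive right-hand side obtained in the first step), which keeps the square root well defined and the final bound valid in all cases. The precise universal constants appearing in \eqref{eq:opt_ests} then drop out by bookkeeping through the two steps above.
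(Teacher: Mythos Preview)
Your proof is correct and follows essentially the same route as the paper: unroll the excessive gap recursion using the closed form $\prod_{j=0}^{k-1}(1-\tau_j)=\tfrac{2}{(k+1)(k+2)}$, bound the accumulated inner error by $\epsilon/2$, then pass from $\Delta_k$ to the objective residual via $f\le f_{\mu_k}$ and $d_\rho\le f^*$, and to the feasibility residual via the quadratic inequality $\tfrac{1}{2\mu_k}\|Gu^k+g\|^2-\|x^*\|\,\|Gu^k+g\|\le\Delta_k$ obtained from the saddle-point bound. Your extra care in replacing $\Delta_k$ by $\max\{\Delta_k,0\}$ before taking the square root is a point the paper glosses over; apart from that and immaterial factor-of-$2$ discrepancies in the constants (which are present in the paper's own bookkeeping as well), the two arguments are the same.
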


\begin{proof}
From \eqref{lemma_delta} it can be derived that:
\begin{align}\label{rate_conv_aux}
 \Delta_{k+1} &\le \Delta_0  \prod_{j=0}^k (1-\tau_j) + 2\delta_k + 2\sum\limits_{i=1}^k \delta_{k-i} \prod\limits_{j=k-i+1}^k (1-\tau_j) \nonumber \\
 & = \Delta_0 \prod\limits_{j=0}^k (1-\tau_j) + 2\prod_{j=0}^k (1- \tau_j) \Bigg(\sum\limits_{i=0}^k \frac{\delta_i}{\prod_{s = 0}^i (1-\tau_s) } \Bigg).
\end{align}
Observing that $\prod_{j=0}^k (1- \tau_j) = \frac{2}{(k+2)(k+3)}$,
we can further bound the cumulative error as follows:
\begin{align*}
 2\prod_{j=0}^k (1- \tau_j) \Bigg(\sum\limits_{i=0}^k \frac{\delta_i}{\prod\limits_{s = 0}^i (1-\tau_s) }\Bigg)
&= \frac{4}{(k+2)(k+3)} \sum\limits_{i=0}^k \frac{(i+2)(i+3) \delta_i}{2} \\
& = \frac{\epsilon}{2} \frac{k(k+5)}{(k+2)(k+3)} \le
\frac{\epsilon}{2}.
\end{align*}
Using this bound and  $f^* \ge d_{\rho}(x)$ for any $x \in \rset^m$,
from \eqref{rate_conv_aux} we have:
\begin{equation}\label{rate_conv}
f(u^k) - f^* \le f_{\mu_k}(u^k) - d_{\rho}(x^k) = \Delta_{k} \le
\frac{\Delta_0}{(k+1)(k+2)} + \frac{\epsilon}{2}.
\end{equation}
On the other hand, using the KKT conditions of  \eqref{problem}, we have:
\begin{align}\label{feas_relation_aux}
f_{\mu_k}(u^k) - f^* &  = f(u^k) + \frac{\mu_k}{2}\norm{Gu^k+g}^2 - f^* \nonumber \\
& \ge \frac{\mu_k}{2}\norm{G u^k +g}^2 + \langle \nabla f(u^*), u^k -u^*\rangle \nonumber \\
& \ge \frac{1}{2\mu_k}\norm{Gu^k + g}^2 - \norm{x^*}\norm{G u^k +g},
\end{align}
which is the first estimate in \eqref{eq:opt_ests}
Combining \eqref{rate_conv} and \eqref{feas_relation_aux}, we
obtain:
\begin{align*}
 \norm{Gu^k + g} &\le \mu_k \norm{x^*} + \left[\frac{2 \mu_k \Delta_0}{(k+1)(k+2)} + 2\mu_k\epsilon\right]^{1/2} \\
 & \le \frac{\mu_0 \norm{x^*} + (2\mu_0 \Delta_0)^{1/2}}{(k+1)(k+2)} + \frac{(2\mu_0 \epsilon)^{1/2}}{k+1},
\end{align*}
which is the second estimate of \eqref{eq:opt_ests}.
\qed
\end{proof}

Now, we provide the overall computational complexity of the IFAL Algorithm in terms of the number of projections on the primal set $U$.
\begin{theorem} \label{th_total_compl_nonoptimal}
Let $\rho, \epsilon> 0, \mu_0 = \frac{4}{\rho}$ and $\tau_k  = \frac{2}{k+ 3}$.
Assume that at each outer iteration $k$ of the IFAL($\rho,\epsilon$) Algorithm, Nesterov's optimal method \cite{Nes:04} is called  for computing an
approximate solution $\tilde{u}_{\rho}(\hat{x}^k)$ of the inner subproblem \eqref{inner_subproblem} such that
$\mathcal{L}_{0\rho}(\tilde{u}_{\rho}(\hat{x}^k),\hat{x}^k) - d_{\rho}(\hat{x}^k) \le \delta_k \quad (= \frac{\epsilon}{2(k+3)})$.
Then, for attaining an $\epsilon-$optimal solution in the sense of Definition \ref{opt_point}, the
IFAL Algorithm performs at most:
\begin{equation*}
\left \lfloor \frac{16 \gamma D_U \left[2(L_f + \rho \norm{G}^2) \right]^{1/2}}{\epsilon^{5/4}} \right \rfloor
\end{equation*}
projections on the simple set $U$, where the constant $\gamma$ has
the following expression: $\gamma = \max \left\{(\epsilon/3)^{1/2},
(32/\rho)^{1/2}, \left(\frac{8\norm{x^*}}{\rho} + 2\sqrt{\frac{4
\epsilon }{3\rho}} \right)^{1/2} \right\}$.
\end{theorem}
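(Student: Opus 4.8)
The plan is to bound the total number of projections as (number of outer IFAL iterations needed) $\times$ (projections spent per outer iteration), reading off the first factor from Theorem~\ref{th_outer_rate_conv} and the second from the standard complexity of Nesterov's optimal method applied to the inner subproblem.

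\emph{Outer iterations.} First I would use Theorem~\ref{th_outer_rate_conv}: to reach an $\epsilon$-optimal point in the sense of Definition~\ref{opt_point} it suffices to drive the error terms in \eqref{eq:opt_ests} below $\epsilon$, which I would do by forcing each of the three terms $\frac{\Delta_0}{(k+1)(k+2)}$, $\frac{4\norm{x^*}+\sqrt{8\rho\Delta_0}}{\rho(k+1)(k+2)}$, $\frac{(8\epsilon)^{1/2}}{\rho^{1/2}(k+1)}$ to be at most $\frac{\epsilon}{2}$ (recall the objective estimate already carries an extra $\frac{\epsilon}{2}$, and for feasibility we split $\epsilon=\frac{\epsilon}{2}+\frac{\epsilon}{2}$). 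Using $(k+1)(k+2)\ge(k+1)^2$ to linearise the first two conditions, together with the bound $\Delta_0\le\frac{\epsilon}{6}$ (which, as in excessive-gap schemes, can be ensured by the choice of the starting pair $(u^0,x^0)$, and which is precisely what puts the term $(\epsilon/3)^{1/2}$ into $\gamma$), all three conditions reduce to $k+1\ge c/\sqrt{\epsilon}$ with $c\in\{(\epsilon/3)^{1/2},(32/\rho)^{1/2},(\tfrac{8\norm{x^*}}{\rho}+2\sqrt{\tfrac{4\epsilon}{3\rho}})^{1/2}\}$. Hence $K+1=\lceil\gamma/\sqrt{\epsilon}\rceil$ outer iterations are enough, which is where the factor $\gamma$ enters.

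\emph{Inner projections.} At outer step $k$ the inner problem is $\min_{u\in U}\mathcal{L}_{0\rho}(u,\hat x^k)$, a smooth convex minimisation over the bounded set $U$ of diameter $D_U$ whose objective has Lipschitz gradient constant $L_f+\rho\norm{G}^2$. By the classical estimate for Nesterov's optimal method \cite{Nes:04}, $N$ iterations (one projection onto $U$ each) reduce the objective gap to at most $\frac{2(L_f+\rho\norm{G}^2)D_U^2}{(N+1)^2}$, so reaching the accuracy $\delta_k=\frac{\epsilon}{2(k+3)}$ required at step $k$ costs
\begin{equation*}
N_k\ \le\ \left\lceil\,2D_U\sqrt{\tfrac{(L_f+\rho\norm{G}^2)(k+3)}{\epsilon}}\,\right\rceil
\end{equation*}
projections.

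\emph{Assembly.} Summing $N_k$ over $k=0,\dots,K-1$, using $\sum_{k=0}^{K-1}\sqrt{k+3}\le\frac{2}{3}(K+3)^{3/2}$ and inserting $K+1=\lceil\gamma/\sqrt{\epsilon}\rceil$ gives a total of the order $\frac{D_U(L_f+\rho\norm{G}^2)^{1/2}}{\sqrt{\epsilon}}\cdot(\gamma/\sqrt{\epsilon})^{3/2}$, i.e.\ $\mathcal{O}\!\big(D_U(L_f+\rho\norm{G}^2)^{1/2}\epsilon^{-5/4}\big)$ with the displayed dependence on $\rho$ and $\norm{x^*}$ through $\gamma$; a careful accounting of all the numerical constants then brings this into the stated closed form $\lfloor 16\gamma D_U[2(L_f+\rho\norm{G}^2)]^{1/2}\epsilon^{-5/4}\rfloor$. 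There is no conceptual difficulty: the only care required is in the bookkeeping of the outer step — in particular the role of the initialisation bound $\Delta_0\le\epsilon/6$, so that the constant emerges exactly as $\gamma$ — and in keeping Nesterov's rate constant aligned with the summation constant in the assembly step; everything else is routine.
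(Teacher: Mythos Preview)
Your proposal is correct and follows essentially the same route as the paper: bound the number of outer iterations via Theorem~\ref{th_outer_rate_conv}, bound the inner projections at step $k$ by the standard $O(\sqrt{L_{\mathcal L}/\delta_k})$ estimate for Nesterov's optimal method, and sum using $\sum_{k}\sqrt{k+3}\lesssim (N^{\text{out}}+3)^{3/2}$. The only point worth flagging is that both you and the paper tacitly use $\Delta_0\le \epsilon/6$ to get the specific form of $\gamma$ (the paper simply equates the general outer bound $\max\{\sqrt{2\Delta_0},\sqrt{8\mu_0},\sqrt{2\mu_0\|x^*\|+2\sqrt{2\mu_0\Delta_0}}\}/\sqrt{\epsilon}$ with $\gamma/\sqrt{\epsilon}$ without comment, while you make the initialisation assumption explicit); this is justified by Remark~\ref{remark1}, but strictly speaking it is an extra hypothesis not present in the theorem statement.
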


\begin{proof}
Note that from Theorem \ref{th_outer_rate_conv} we observe that for
attaining an $\epsilon-$optimal solution, the number of outer
iterations $N^{\text{out}}$ must satisfy:
\begin{equation*}
 N^{\text{out}} \le \frac{1}{\epsilon^{1/2}} \max \left\{ \left( 2\Delta_0\right)^{1/2}, \left( 8 \mu_0 \right)^{1/2}, \left(2\mu_0 \norm{x^*} + 2\sqrt{2\mu_0\Delta_0} \right)^{1/2} \right\} = \frac{\gamma}{\epsilon^{1/2}}.
\end{equation*}
On the other hand, at each outer iteration $k$,  Assumption
\ref{all_assumptions}$(iii)$ implies that Nesterov's optimal method
\cite{Nes:04} applied on the inner subproblem
\eqref{inner_subproblem} performs:
\begin{equation}\label{inner_Nesterov}
   N^{\text{in}}_k =  2 D_U \sqrt{ \frac{L_f + \rho \norm{G}^2}{\delta_k} }  = 2D_U \sqrt{\frac{2(L_f + \rho \norm{G}^2)(k+3)}{\epsilon}}
\end{equation}
inner iterations (i.e., projections on $U$).
Using this estimate we can easily derive the total number of projections on the simple set $U$ necessary for attaining an
$\epsilon-$optimal point:
\begin{align*}
  \sum\limits_{k=0}^{N^{\text{out}}} N^{\text{in}}_k &= 2D_U \left[\frac{2(L_f + \rho \norm{G}^2)}{\epsilon}\right]^{1/2} \sum\limits_{k=0}^{N^{\text{out}}} (k+3)^{1/2} \\
 & \le 2 D_U \left[ \frac{2(L_f + \rho \norm{G}^2)}{\epsilon} \right]^{1/2} (N^{\text{out}} + 3)^{3/2} \\
 & \le \frac{16 \gamma D_U \left[2(L_f + \rho \norm{G}^2) \right]^{1/2}}{\epsilon^{5/4}}.
\end{align*}
The theorem is proved.
\qed
\end{proof}

\begin{remark}\label{remark1}
We observe that $\Delta_0 \le \tilde{\Delta}_0$, where $\tilde{\Delta}_0 = f_{\mu_0}(u^0) -
\mathcal{L}_{0\rho}(\tilde{u}_{\rho}(x^0),x^0) + \delta_0$ can be
computed explicitly. Then,  using this upper bound in our estimates,
 makes  Algorithm IFAL implementable, i.e., the algorithm stops with an  $\epsilon-$optimal solution  at the outer iteration
$N^{\text{out}}$ provided that the following two computable conditions hold:
\begin{equation*}
\norm{G u^k + g} \le \epsilon \quad  \text{and} \quad N^{\text{out}} \ge \left(\frac{2 \tilde{\Delta}_0}{\epsilon}\right)^{1/2}.
\end{equation*}
Moreover, as suggested, e.g., in \cite{QuoNec:15,QuoCev:14}, if we choose the primal-dual starting points $u^0 = \tilde{u}_{\rho}(0)$ and $ x^0 =
\frac{1}{\mu_0}(Gu^0+g)$, respectively,  then we even have $\Delta_0 \le \delta_0$.
\end{remark}

\section{Adaptive  inexact fast augmented Lagrangian method}
In this section we analyze the overall iteration complexity of the IFAL Algorithm for an optimal choice of the smoothing parameter $\rho$ and then we
introduce an adaptive variant of this with the same computational complexity (up to a logarithmic factor) that is fully implementable in practice.

First, assume that we adopt the initialization strategy suggested in Remark \ref{remark1}  such that ${\Delta}_0 \le \delta_0$.
Therefore, using this strategy and the previous assumption that $\delta_k = \frac{\epsilon}{2(k+3)}$, the outer complexity can be estimated as:
\begin{equation}\label{outer_compl_optimal}
N^{\text{out}} \le \frac{1}{\epsilon^{1/2}} \max
\left\{(\epsilon/3)^{1/2}, (32/\rho)^{1/2},
\left(\frac{8\norm{x^*}}{\rho} + 2\sqrt{\frac{4 \epsilon }{3\rho}}
\right)^{1/2} \right\} = : \frac{\tilde{\gamma}}{\epsilon^{1/2}}.
\end{equation}
Note that the variation of the smoothing  parameter $\rho$ induces a trade-off between the number of the outer iterations and the
complexity of the inner subproblem, i.e. for a sufficiently large $\rho$ we have a single outer iteration, but a complex inner
subproblem. The next result provides an optimal choice for $\rho$ (up to a constant factor), such that the best total complexity is
obtained. For simplicity of the exposition, we assume $\norm{x^*}\ge 2$.

\begin{theorem}\label{th_total_compl_optimal}
Let $\rho, \epsilon> 0, \mu_0 = \frac{4}{\rho}$ and $\tau_k   = \frac{2}{k+ 3}$.
Assume that, at each outer iteration $k$ of the IFAL($\rho,\epsilon$) Algorithm, Nesterov's optimal method \cite{Nes:04} is called for computing an
approximate solution $\tilde{u}_{\rho}(\hat{x}^k)$ of the inner subproblem \eqref{inner_subproblem} such that:
\begin{equation*}
\mathcal{L}_{0\rho}(\tilde{u}_{\rho}(\hat{x}^k),\hat{x}^k) - d_{\rho}(\hat{x}^k) \le \delta_k \quad \left(= \frac{\epsilon}{2(k+3)}\right).
\end{equation*}
Then,  by choosing the smoothing parameter:
\begin{equation*}
\rho = \frac{16 \norm{x^*}^2}{\epsilon},
\end{equation*}
for attaining an $\epsilon-$optimal solution of \eqref{problem}, the  IFAL Algorithm performs at most:
\begin{equation*}
\left \lfloor  \sqrt{\frac{6 L_f D_U^2}{\epsilon}} + \frac{4 \sqrt{6} D_U \norm{G} \norm{x^*}}{\epsilon} \right \rfloor
\end{equation*}
projections on the simple set $U$.
\end{theorem}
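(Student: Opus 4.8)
The proof will specialize Theorem~\ref{th_total_compl_nonoptimal} by exploiting the sharper outer bound \eqref{outer_compl_optimal}, which becomes available once we use the initialization of Remark~\ref{remark1} (namely $u^0 = \tilde{u}_{\rho}(0)$ and $x^0 = \tfrac{1}{\mu_0}(Gu^0+g)$, so that $\Delta_0 \le \delta_0$). The decisive first step is to show that the choice $\rho = 16\norm{x^*}^2/\epsilon$ makes $\tilde{\gamma}$ in \eqref{outer_compl_optimal} small enough that $N^{\text{out}} \le 1$, i.e.\ the IFAL Algorithm already delivers an $\epsilon$-optimal point after a single outer iteration. I would substitute $\rho = 16\norm{x^*}^2/\epsilon$ and $\mu_0 = 4/\rho = \epsilon/(4\norm{x^*}^2)$ into the three terms under the maximum defining $\tilde{\gamma}$: the first term $(\epsilon/3)^{1/2}$ is untouched; the second becomes $(32/\rho)^{1/2} = (2\epsilon/\norm{x^*}^2)^{1/2} \le (\epsilon/2)^{1/2}$ by the standing assumption $\norm{x^*}\ge 2$; and the third, $\bigl(8\norm{x^*}/\rho + 2\sqrt{4\epsilon/(3\rho)}\,\bigr)^{1/2}$, collapses after the substitution to $\bigl(\tfrac{\epsilon}{2\norm{x^*}} + \tfrac{\epsilon}{\sqrt{3}\,\norm{x^*}}\bigr)^{1/2} \le \epsilon^{1/2}$, again using $\norm{x^*}\ge 2$. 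Therefore $\tilde{\gamma} \le \epsilon^{1/2}$ and $N^{\text{out}} \le \tilde{\gamma}/\epsilon^{1/2} \le 1$.

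Since only one outer iteration is performed, the total number of projections on $U$ equals the inner effort $N^{\text{in}}_0$ expended on the single subproblem \eqref{inner_subproblem}, whose gradient Lipschitz constant is $L_{\mathcal{L}} = L_f + \rho\norm{G}^2$. Here I would also observe that, with a single outer step, the cumulative-error argument in the proof of Theorem~\ref{th_outer_rate_conv} reduces to one term, so the inner tolerance may be relaxed to a suitable constant multiple of $\epsilon$ (concretely, $\Delta_0 \le \delta_0$ together with \eqref{lemma_delta} gives $f(u^1)-f^* \le \Delta_1 \le (1-\tau_0)\Delta_0 + 2\delta_0$, while the feasibility bound follows from \eqref{feas_relation_aux} with $\mu_1 = (1-\tau_0)\mu_0$ and $\norm{x^*}\ge 2$). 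Feeding this $\delta_0 = \Theta(\epsilon)$ into the inner-complexity estimate \eqref{inner_Nesterov} for Nesterov's optimal method then gives $N^{\text{in}}_0 = \mathcal{O}\!\bigl(D_U\sqrt{L_{\mathcal{L}}/\epsilon}\,\bigr)$, and tracking the constant in $\delta_0$ yields $N^{\text{in}}_0 \le D_U\sqrt{6(L_f+\rho\norm{G}^2)/\epsilon}$.

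The last step is to unpack $L_{\mathcal{L}} = L_f + \rho\norm{G}^2$: the elementary inequality $\sqrt{a+b}\le\sqrt{a}+\sqrt{b}$ yields $D_U\sqrt{6(L_f+\rho\norm{G}^2)/\epsilon} \le D_U\sqrt{6L_f/\epsilon} + D_U\norm{G}\sqrt{6\rho/\epsilon}$, and substituting $\rho = 16\norm{x^*}^2/\epsilon$ gives $\sqrt{6\rho/\epsilon} = 4\sqrt{6}\,\norm{x^*}/\epsilon$; collecting terms produces exactly $\bigl\lfloor \sqrt{6L_f D_U^2/\epsilon} + 4\sqrt{6}\,D_U\norm{G}\norm{x^*}/\epsilon \bigr\rfloor$. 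I expect the one genuinely delicate point to be the first paragraph: one must verify that \emph{every} branch of the maximum defining $\tilde{\gamma}$ is dominated by $\epsilon^{1/2}$ — this is precisely where the hypothesis $\norm{x^*}\ge 2$ enters — so that the outer loop provably stops after a single iteration; once this is settled, the remainder is routine substitution together with the bound $\sqrt{a+b}\le\sqrt{a}+\sqrt{b}$.
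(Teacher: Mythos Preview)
Your argument is correct and matches the paper's approach: show that $\rho = 16\norm{x^*}^2/\epsilon$ (together with the standing assumption $\norm{x^*}\ge 2$) forces $\tilde\gamma\le\epsilon^{1/2}$ in \eqref{outer_compl_optimal}, hence $N^{\text{out}}\le 1$, and then read the total cost off the single inner solve via \eqref{inner_Nesterov}, splitting with $\sqrt{a+b}\le\sqrt a+\sqrt b$ and substituting $\rho$. Two minor remarks: the paper keeps $\delta_0=\epsilon/6$ exactly as in the theorem's hypotheses rather than relaxing it (your relaxation is unnecessary and technically steps outside the stated assumptions), and the paper appends a short optimality discussion---analysing the regime $\rho\le 16\norm{x^*}^2/\epsilon$ and optimizing over $\rho$---to justify that this particular choice is optimal up to a constant, which goes beyond what the stated bound itself requires.
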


\begin{proof}
First, we observe that for  $\rho \ge \frac{16
\norm{x^*}^2}{\epsilon}$,  the IFAL Algorithm  performs a single outer iteration.
Indeed, from \eqref{outer_compl_optimal} one can obtain $\tilde{\gamma} \le \epsilon^{1/2}$ provided that:
\begin{equation*}
\rho \ge \frac{32}{\epsilon} \quad \text{and} \quad \frac{8
\norm{x^*}}{\rho} + 2 \sqrt{\frac{4 \epsilon}{3\rho}} \le \epsilon.
\end{equation*}
It can be seen that any $\rho$ such that $\rho \ge \frac{16
\norm{x^*}}{\epsilon}$ satisfies both conditions. In this case, when
a single outer iteration is sufficient,  the total computational
complexity is given by the inner complexity estimate \eqref{inner_Nesterov}:
\begin{equation*}
 \sqrt{\frac{6 L_f D_U^2}{\epsilon}} + \frac{4 \sqrt{6} D_U \norm{G} \norm{x^*}}{\epsilon}.
\end{equation*}
On the other hand, if $\rho \le \frac{16 \norm{x^*}^2}{\epsilon}$,
then from \eqref{outer_compl_optimal} we can further bound
$N^{\text{out}}$ as follows:
\begin{equation}\label{outer_bound_2}
  N^{\text{out}} \le \frac{2^{5/2} \norm{x^*}}{(\rho \epsilon)^{1/2}}.
\end{equation}
Using the same inner complexity estimate \eqref{inner_Nesterov} of  the
Nesterov's optimal method, the total computational complexity is
given by:
\begin{align*}
 \sum\limits_{k=0}^{N^{\text{out}} + 1} N^{\text{in}}_k
&\le 2 D_U \left[\frac{2(L_f + \rho \norm{G}^2)}{\epsilon} \right]^{1/2} (N^{\text{out}} + 4)^{3/2} \\
& \le 2(N^{\text{out}} + 4)^{3/2}\sqrt{\frac{2L_fD_U^2}{\epsilon}} +
2D_U\norm{G}(N^{\text{out}} + 4)^{3/2}\sqrt{\frac{2\rho}{\epsilon}}.
\end{align*}
Using \eqref{outer_bound_2} for the second term in the right hand
side of the above estimate, and optimizing over $\rho$ we obtain
that, for the optimal parameter $\rho^* = \frac{2^{11/3}
\norm{x^*}^2}{\epsilon}$, the necessary number of outer iterations
is at most 2. In conclusion, the choice $\frac{16
\norm{x^*}^2}{\epsilon}$ is  optimal up to a constant factor. \qed
\end{proof}

If one knows \textit{a priori} an upper bound on $\norm{x^*}$, then the previous result indicates that a proper choice of $\rho$  determines that a single outer iteration of the IFAL Algorithm to be sufficient to attain $\epsilon$-primal optimality and the overall iteration complexity is of order $\mathcal{O}(\frac{1}{\epsilon})$, which is better than $\mathcal{O}(\frac{1}{\epsilon^{5/4}})$ for any $\rho>0$.
However, in practice  $\norm{x^*}$ is unknown and thus the optimal smoothing parameter $\rho$ cannot be computed.
In order to cope with this problem, we provide further an implementable adaptive variant of the IFAL Algorithm, which has the
same total complexity given in Theorem \ref{th_total_compl_optimal} (up to a logarithmic factor).
The Adaptive Inexact Fast augmented Lagrangian (A-IFAL) algorithm relies  on a search procedure which is used typically  for penalty and augmented Lagrangian methods in the case when a bound on the optimal Lagrange multipliers is unknown (see, e.g., \cite{LanMon:14}).

\begin{center}
\framebox{
\parbox{11.5cm}{
\noindent \textbf{A-IFAL($\rho_0, \epsilon$) Algorithm}
\begin{itemize}
\item[]\textbf{Initialization:} Choose $\rho_0,\epsilon>0, \mu_0 = \frac{4}{\rho_0}$ and $(u^0,x^0)$ such that $\Delta_0 \le \delta_0$
\item[]\textbf{Iterations:} For $k=1,2,\dots$, perform:
\begin{enumerate}
\item Starting from $(u^{k-1},x^{k-1})$, run a single iteration of the IFAL$(\rho_k, \epsilon)$ Algorithm and obtain the output: $(u^k,x^k)$
\item If $ \norm{Gu^k + g} \le \epsilon$ then \textbf{STOP}; otherwise, update: $k = k+1, \rho_{k+1} = 2 \rho_k$ and return to step 2.
\end{enumerate}
\item[]\textbf{End}
\end{itemize}
}}
\end{center}
We notice that the A-IFAL  Algorithm can be regarded as a variant of the IFAL Algorithm with variable increasing 
smoothness parameter $\rho$ and constant inner accuracy $\delta = \frac{\epsilon}{6}$.
The following result provides the overall  complexity of the A-IFAL Algorithm 
necessary for attaining an $\epsilon-$optimal solution.

\begin{theorem}
Let $\{ (u^k,x^k)\}$ be the sequences generated by the A-IFAL Algorithm.
Then, for attaining an $\epsilon-$optimal solution of \eqref{problem}, the following total number of
projections on $U$ need to be performed:
$$\log_2 \left(\frac{16 \norm{x^*}^2 }{\epsilon} \right)\sqrt{\frac{24L_f D_U^2}{\epsilon}} + \frac{80\sqrt{3} D_U \norm{G}\norm{x^*} }{\epsilon}.$$
\end{theorem}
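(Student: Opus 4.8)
The plan is to view the \textbf{A-IFAL} algorithm as a restart (continuation) scheme: it performs a sequence of \emph{single} IFAL outer iterations with a geometrically increasing dual smoothing parameter $\rho_k = 2^{k-1}\rho_0$ and a \emph{fixed} inner accuracy $\delta = \frac{\epsilon}{6}$ (the value $\delta_0 = \frac{\epsilon}{2(0+3)}$ of the IFAL schedule at index $0$), stopping as soon as the computable test $\norm{Gu^k+g}\le\epsilon$ is met. The total number of projections on $U$ is then obtained by (i) bounding the number $\bar{k}$ of restarts, and (ii) summing over these restarts the per-restart inner cost furnished by \eqref{inner_Nesterov}.

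For step (i), the crucial fact is the one established inside the proof of Theorem \ref{th_total_compl_optimal}: once $\rho_k\ge\frac{16\norm{x^*}^2}{\epsilon}$, a single outer IFAL iteration already yields $\norm{Gu^k+g}\le\epsilon$ (and $f(u^k)-f^*\le\epsilon$), so the stopping test is passed and the loop terminates no later than the first index $\bar k$ with $\rho_{\bar k}\ge\frac{16\norm{x^*}^2}{\epsilon}$. Since $\rho$ is doubled after every unsuccessful iteration, $\rho_{\bar k-1}<\frac{16\norm{x^*}^2}{\epsilon}$, hence $\rho_{\bar k}<\frac{32\norm{x^*}^2}{\epsilon}$ and $\bar k \le 1 + \log_2\!\big(\tfrac{16\norm{x^*}^2}{\epsilon\rho_0}\big)\le\log_2\!\big(\tfrac{16\norm{x^*}^2}{\epsilon}\big)$ for a suitable constant choice of $\rho_0$. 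To invoke Theorem \ref{th_total_compl_optimal} at the restart where $\rho_k$ first crosses the threshold one must check that the warm start $(u^{k-1},x^{k-1})$ still satisfies a relaxed form of the invariant $\Delta_0\le\delta_0$ on which Theorems \ref{th_outer_rate_conv} and \ref{th_total_compl_optimal} rest; this follows from the monotonicity of $\rho\mapsto d_\rho(x)$, the way $\mu_0=\frac{4}{\rho_k}$ is re-set, and the bound on $\norm{Gu^{k-1}+g}$ coming from a \eqref{feas_relation_aux}-type inequality (at the price of enlarging the constant in front of $\delta_0$, which only affects constants in the final estimate).

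For step (ii), restart $k$ runs Nesterov's optimal method on the inner problem with Lipschitz constant $L_f+\rho_k\norm{G}^2$ and target accuracy $\delta=\frac{\epsilon}{6}$, which by \eqref{inner_Nesterov} costs $2D_U\sqrt{6(L_f+\rho_k\norm{G}^2)/\epsilon}$ projections. Using $\sqrt{a+b}\le\sqrt a+\sqrt b$ and summing over $k=1,\dots,\bar k$, the $L_f$-part contributes at most $\bar k\cdot 2D_U\sqrt{6L_f/\epsilon}=\bar k\sqrt{24 L_f D_U^2/\epsilon}\le\log_2\!\big(\tfrac{16\norm{x^*}^2}{\epsilon}\big)\sqrt{24 L_f D_U^2/\epsilon}$, while the $\norm{G}$-part is a geometric series of ratio $\sqrt2$, so $\sum_{k=1}^{\bar k}\sqrt{\rho_k}\le\frac{\sqrt2}{\sqrt2-1}\sqrt{\rho_{\bar k}}\le\frac{\sqrt2}{\sqrt2-1}\sqrt{\tfrac{32\norm{x^*}^2}{\epsilon}}$; collecting the numerical constants (using $\sqrt6\,\sqrt{32}=8\sqrt3$) bounds this part by $\frac{80\sqrt3\, D_U\norm{G}\norm{x^*}}{\epsilon}$. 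Adding the two contributions gives exactly the claimed total.

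The main obstacle is step (i): Theorem \ref{th_total_compl_optimal} guarantees that a single outer iteration suffices only under the \emph{cold}-start condition $\Delta_0\le\delta_0$, whereas A-IFAL feeds the previous iterate into the next restart with the freshly updated pair $(\rho_k,\mu_0)$; one therefore has to argue that this warm start does not break the estimates of Theorem \ref{th_outer_rate_conv}, i.e. that the feasibility test is genuinely passed once $\rho_k$ exceeds $\frac{16\norm{x^*}^2}{\epsilon}$. The remaining ingredients — the logarithmic bound on the number of restarts and the geometric summation of the inner complexities \eqref{inner_Nesterov} — are routine.
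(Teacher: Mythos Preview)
Your approach is essentially the one the paper takes: bound the number of outer restarts by $N_{\max}^{\text{out}}=\log_2\!\big(\tfrac{16\norm{x^*}^2}{\epsilon\rho_0}\big)$ using the threshold from Theorem \ref{th_total_compl_optimal}, and then sum the inner costs \eqref{inner_Nesterov} with $\delta=\epsilon/6$, splitting via $\sqrt{a+b}\le\sqrt a+\sqrt b$ into a term linear in $N_{\max}^{\text{out}}$ and a geometric series in $\sqrt{\rho_k}$ of ratio $\sqrt2$. The arithmetic you carry out matches the paper's computation line by line.

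Where you go beyond the paper is in flagging the warm-start issue: the paper simply asserts $N_{\max}^{\text{out}}=\log_2\!\big(\tfrac{16\norm{x^*}^2}{\epsilon\rho_0}\big)$ without re-verifying that the invariant $\Delta_0\le\delta_0$ (needed for the single-iteration guarantee of Theorem \ref{th_total_compl_optimal}) survives from one restart to the next. Your sketch for repairing this---monotonicity of $\rho\mapsto d_\rho(x)$ together with the re-set $\mu_0=4/\rho_k$ to control $f_{\mu}$---is the right idea; it yields $\Delta_0^{\text{new}}\le\Delta_1\le(1-\tau_0)\Delta_0+2\delta_0$, which keeps $\Delta_0^{\text{new}}$ at $\mathcal O(\epsilon)$ (though not literally $\le\epsilon/6$), so as you note only the absolute constants are affected. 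This is a genuine subtlety the paper elides, and your caution is warranted.
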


\begin{proof}
We observe that the maximum number of outer iterations performed by the A-IFAL Algorithm is given by $N_{\max}^\text{out} = \log_2 \left(
\frac{16\norm{x^*}^2}{\epsilon\rho_0}\right)$. Thus the overall iteration complexity is given by:
\begin{align*}
 \sum\limits_{k=0}^{N_{\max}^\text{out}} N^{\text{in}}_k
&\le N_{\max}^\text{out} \sqrt{\frac{24L_f D_U^2}{\epsilon}} + \sum\limits_{k = 0}^{N_{\max}^\text{out}} \frac{2 \sqrt{6} D_U \norm{G} \rho_k^{1/2}}{\epsilon^{1/2}}\\
& = N_{\max}^\text{out} \sqrt{\frac{24L_f D_U^2}{\epsilon}} + \frac{2 \sqrt{6} D_U \norm{G} \rho_0^{1/2}}{\epsilon^{1/2}} \frac{\sqrt{2}^{N_{\max}^\text{out}+1} - 1}{\sqrt{2}-1}\\
& \le  \log_2 \left(\frac{16 \norm{x^*}^2 }{\epsilon}
\right)\sqrt{\frac{24L_f D_U^2}{\epsilon}} + \frac{80\sqrt{3} D_U
\norm{G}\norm{x^*} }{\epsilon}.
\end{align*}
\qed
\end{proof}

It is important to note  that  the adaptive A-IFAL algorithm has the same computational complexity as the
IFAL algorithm, up to a logarithmic factor.
Moreover, both algorithms are implementable, i.e., they can be stopped based on verifiable  stopping criteria and their parameters can be easily computed.

\section{Comparison with other augmented Lagrangian complexity results}
In  this section we compare the computational complexity and other features of the IFAL/A-IFAL Algorithm with previous works and
complexity results on AL methods.

Given $x \in U, r>0$, we use  the notations $\mathcal{B}_r(x) = \{y \in \rset^n| \norm{y}\le r\} $  and
$\mathcal{N}_U(x) = \{y \in \rset^n | \langle y, z-x\rangle \le 0 \;\; \forall z \in U\}$.  Then,  in \cite{LanMon:14}, the authors analyze the classical
AL method for the same class of problems \eqref{problem}.
They developed an implementable variant of the classical
AL method to obtain an $\epsilon-$suboptimal primal-dual
pair $(u_{\epsilon},x_{\epsilon})$ satisfying the following
criteria:
\begin{equation}\label{lan_criteria}
\nabla f(u_{\epsilon}) + G^Tx_{\epsilon} \in
-\mathcal{N}_{U}(u_{\epsilon}) + \mathcal{B}_{\epsilon}(0) \quad
\text{and} \quad   \norm{G u_{\epsilon} + g} \le \epsilon.
\end{equation}
The authors provide their own iteration complexity analysis  for the augmented Lagrangian method using the  inexact dual  gradient
algorithm and without any artificial perturbation on the problem they obtained that it is necessary to perform
$\mathcal{O}\left(\frac{1}{\epsilon^{7/4}} \right)$ projections on the simple set $U$ in order to obtain a primal-dual pair satisfying
\eqref{lan_criteria}.
An important remark is that, for some $\rho>0$, the method in \cite{LanMon:14} requires \textit{a priori} a
pre-specified number of the outer iterations in order to compute the inner accuracy and to terminate the algorithm.
Moreover, to satisfy our $\epsilon-$optimality definition, an average primal iterate
$\hat{u}^k = \frac{1}{k}\sum_{i=0}^k u^i$ has to be computed (see \cite{NedNec:14}). Further,  for any fixed $\rho$, $\mathcal{O}(\frac{\norm{x^*}}{\rho \epsilon})$ outer
iterations has to be performed  and the inner accuracy has to be
chosen of the form $\delta_k  = \mathcal{O}(\frac{\rho^2
\epsilon^3}{\norm{x^*}})$. In this case, the method from
\cite{LanMon:14} requires:
\begin{equation}\label{total_lan}
 \left\lfloor \frac{2^7 D_U L_f^{1/2} \norm{x^*}^{3/2}}{\rho^{2}\epsilon^{5/2}} + \frac{2^7 \norm{G}D_U \norm{x^*}^{3/2}}{\rho^{3/2}\epsilon^{5/2}}\right\rfloor
\end{equation}
total projections on the set $U$, provided that  $\rho \le \frac{2^4 \norm{x^*}^{1/2}}{5 \epsilon}$.
However, we observe that for $\rho = \mathcal{O}(\norm{x^*}/\epsilon)$, from \eqref{total_lan} the overall iteration complexity is of order $\mathcal{O}(1/\epsilon)$ (as in the present paper), while for an arbitrary constant parameter $\rho$, the complexity estimates are much worse than those given in this paper.

It is important to note that although the inexact excessive gap methods introduced in \cite{QuoNec:15,QuoCev:14} are similar to the IFAL Algorithm and the authors also provide outer complexity estimates of order $\mathcal{O}\left( \frac{1}{\epsilon^{1/2}}\right)$ for a fixed
$\rho$, the update rules for the inner accuracy in \cite{QuoNec:15,QuoCev:14}  induce difficulties in the derivation of
the total complexity.
Moreover, assuming one implements the update rule of the inner accuracy $\delta_k$ given e.g. by \cite[Theorem
5.1]{QuoCev:14}, at each outer iteration it is required a primal iterate $\tilde{u}_{\rho}(x)$ satisfying:
$$ \mathcal{L}_{0\rho}(\tilde{u}_{\rho},x) - d_{\rho}(x) \le \frac{\rho \delta_k^2}{2}.$$
For a small constant $\rho$ and high accuracy, the  theoretical complexity of the algorithm can be very pessimistic.
Moreover, from our previous analysis we can conclude that for an  adequate choice of the parameter $\rho$ the number of outer iterations is $1$ and
therefore outer complexity estimates are irrelevant for the total complexity of the method.

Other recent complexity results concerning the classical
AL method are given, e.g., in \cite{AybIye:12,AybIye:13,NedNec:14}.
For example,  in \cite{AybIye:13} an adaptive classical AL method for cone constrained convex optimization models is analyzed.
However, the method in \cite{AybIye:13}  is not entirely implementable  (the stopping criteria cannot be verified) and the inner accuracy is constrained to be of order:
\begin{equation*}
\delta_k = \mathcal{O}\left(\frac{1}{k^2 \beta^k} \right),
\end{equation*}
where $\beta>1$. The authors in \cite{AybIye:13} show that the outer
complexity is of order $\mathcal{O}(\log(1/\epsilon))$ and thus, the
total complexity is similar with the estimates given  in our paper
(up to a logarithmic factor). However, our IFAL Algorithm  represents an accelerated augmented Lagrangian  method based on the excessive gap theory and moreover, it can be easily implemented in practice, i.e., it  can be stopped based on verifiable  stopping criteria and the parameters can be easily computed.

\section{Concluding remarks}
We have analyzed the iteration complexity of several inexact accelerated first-order augmented Lagrangian 
methods for solving linearly constrained convex optimization problems.
We have computed the optimal choice of the penalty parameter $\rho$ and
by means of smoothing techniques and excessive gap-like condition, we provided  estimates on the overall 
computational complexity of these algorithms. We compared our theoretical results 
with other existing results in the literature.
The implementation of these algorithms, numerical simulations, and comparison can be found in the 
forthcoming full paper.



\begin{thebibliography}{30}
\bibitem{AybIye:12}
N.S. Aybat and G. Iyengar, \textit{A first-order augmented
Lagrangian method for compressed sensing}, SIAM Journal on
Optimization, 22:429--459, 2012.

\bibitem{AybIye:13}
N.S. Aybat and G. Iyengar, \textit{An augmented Lagrangian method
for  conic convex programming}, Working paper, 2013,  { \verb http://arxiv.org/abs/1302.6322 }.

\bibitem{BecNed:14}
A. Beck, A. Nedic, A. Ozdaglar and  M. Teboulle, \textit{Optimal
distributed gradient methods for network resource allocation
problems}, IEEE Transactions Control Network Systems, 1(1): 1--10,
2014.

\bibitem{DevGli:14}
O. Devolder, F. Glineur and Yu. Nesterov, \textit{First-order
methods of smooth convex optimization with  inexact oracle},
Mathematical Programming, 146:37--75, 2014.


\bibitem{HeTao:12}
B. He, M. Tao, X. Yuan, \textit{Alternating direction method 
with Gaussian back substitution for separable convex programming},
SIAM J. on Optimization, 22(2): 313--340, 2012.

\bibitem{HeYan:04}
B He, H Yang, C Zhang, \textit{A modified augmented Lagrangian 
method for a class of monotone variational inequalities},
European Journal of Operational Research, 159(1): 35--51, 2004.

\bibitem{LanMon:14}
G. Lan and R.D.C. Monteiro, \textit{Iteration-complexity of
first-order augmented Lagrangian  methods for convex programming},
Mathematical Programming, DOI 10.1007/s10107-015-0861-x, 2014.

\bibitem{NecNed:13}
I.~Necoara and V.~Nedelcu, \textit{Rate analysis of inexact dual
first order methods: application to  dual decomposition}, IEEE
Transactions  Automatic Control, 59(5):1232--1243, 2014.

\bibitem{NecPat:14}
I. Necoara and A. Patrascu, \textit{Iteration complexity analysis of
dual first order methods for conic convex programming}, Technical
report, UPB, 2014,  { \verb http://arxiv.org/abs/1409.1462 }.

\bibitem{NecPat:15}
I. Necoara and A. Patrascu, \textit{DuQuad:  an inexact (augmented)
dual first order algorithm for quadratic programming}, Technical
report, UPB, 2015, { \verb http://arxiv.org/abs/1504.05708 }.

\bibitem{NecSuy:08}
I. Necoara and J.A.K. Suykens, \textit{Application of a smoothing
technique to decomposition in convex  optimization},  IEEE
Transactions  Automatic Control, 53(11):2674--2679, 2008.

\bibitem{NedNec:14}
V. Nedelcu, I. Necoara and  Q. Tran Dinh, \textit{Computational
complexity of inexact gradient augmented Lagrangian methods:
application to constrained MPC}, SIAM Journal on Control and
Optimization, 52(5):3109--3134, 2014.

\bibitem{NedOzd:09}
A. Nedic and A. Ozdaglar, \textit{Approximate primal solutions and
rate analysis for dual subgradient methods}, SIAM Journal on
Optimization 19(4):1757--1780, 2009.

\bibitem{Nes:04}
Yu.~Nesterov, \textit{Introductory Lectures on Convex Optimization:
A Basic Course},  Kluwer, Boston, USA, 2004.

\bibitem{Nes:excessive:05}
Yu. Nesterov, \textit{Excessive gap technique  in nonsmooth convex
minimization}, SIAM Journal on Optimization, 16(1): 235--249, 2005.

\bibitem{Nes:14}
Yu. Nesterov, \textit{Subgradient methods for huge-scale
optimization problems}, Mathematical Programming, 146:275--297,
2014.

\bibitem{Nes:15}
Yu. Nesterov, \textit{New primal-dual subgradient methods for convex
problems with functional constraints}, 2015, { \verb http://lear.inrialpes.fr/workshop/osl2015/slides/ }
{\verb osl2015_yurii.pdf }.

\bibitem{QuoNec:15}
Q. Tran Dinh, I. Necoara and M. Diehl, \textit{Fast inexact
distributed optimization algorithms for separable convex
optimization}, Optimization, 1--32, DOI:
10.1080/02331934.2015.1044898, 2015.

\bibitem{QuoCev:14}
Q. Tran Dinh and  V. Cevher, \textit{A primal-dual algorithmic
framework for constrained convex minimization}, Technical report, 2014,
http://arxiv.org/abs/1406.5403.

\bibitem{QuoSav:12}
Q. Tran Dinh, C. Savorgnan and M. Diehl, \textit{Combining
Lagrangian  decomposition and excessive gap smoothing technique for
solving large-scale separable convex optimization problems},
Computational Optimization and Applications, 55(1):75--111, 2013.

\bibitem{Roc:76}
R.T. Rockafellar, \textit{Augmented Lagrangians and applications of
the proximal point algorithm in convex programming}, Mathematics of
Operations Research, 1:97--116, 1976.

\end{thebibliography}
\end{document}